\newtheorem{thm}{Theorem}[section]
\newtheorem{prop}[thm]{Proposition}
\theoremstyle{definition}
\newtheorem{ex}[thm]{Example}
\newtheorem{rmk}[thm]{Remark}
\newtheorem*{nota*}{Notation}
\newtheorem{ques}{Question}
\newtheorem{lemma}{Lemma}
\newcommand{\on}{\operatorname}
\newcommand{\Aut}{\on{Aut}}
\title[polynomiality of orbifold GW theory of root stacks]{On the polynomiality of orbifold Gromov--Witten theory of root stacks}
\author{Hsian-Hua Tseng}
\thanks{Department of Mathematics, Ohio State University, 100 Math Tower, 231 West 18th Ave. Columbus OH 43210 USA. 
\texttt{hhtseng@math.ohio-state.edu}}
\author{Fenglong You}
\thanks{Department of Mathematical and Statistical Sciences, 632 CAB, University of Alberta, Edmonton AB T6G 2G1 Canada. \texttt{fenglong@ualberta.ca}}
\begin{document}

\keywords{Gromov--Witten theory, Virtual Localization, Degeneration, Moduli Space}
\subjclass[2010]{Primary: 	14N35. Secondary: 14N10, 	53D45}

\maketitle

\begin{abstract}
In \cite{TY18}, higher genus Gromov--Witten invariants of the stack of $r$-th roots of a smooth projective variety $X$ along a smooth divisor $D$ are shown to be polynomials in $r$. In this paper we study the degrees and coefficients of these polynomials.
\end{abstract}
\tableofcontents

\section{Introduction}

\subsection{Overview}
In \cite{TY18}, we showed that, when $r$ is sufficiently large, higher genus orbifold Gromov--Witten invariants of root stacks are polynomials in $r$ and the constant terms are the corresponding relative Gromov--Witten invariants. This result has been generalized to include orbifold invariants of root stacks with large ages by Fan--Wu--You \cite{FWY}, and the constant terms are relative Gromov--Witten invariants with negative contact orders. On the other hand, it was proved in Abramovich--Cadman--Wise \cite{ACW} and Fan--Wu--You \cite{FWY} that genus zero relative and orbifold invariants are equal for sufficiently large $r$. In other words, genus zero orbifold invariants are constant in $r$, that is, they are polynomials in $r$ of degree zero. The genus zero result of \cite{ACW} was reproved in \cite{TY18}*{Section 4} using degeneration and virtual localization techniques. Furthermore, \cite{TY18}*{Theorem 1.9} states that stationary relative and orbifold invariants of target curves coincide in all genera. Hence, stationary orbifold invariants of target curves are also polynomials in $r$ of degree zero. 

In this paper, we answer the following three frequently asked questions related to the polynomiality of orbifold Gromov--Witten invariants of the root stack $X_{D,r}$. 

\begin{ques}\label{ques-1}
What is the bound of the degree of the polynomial?
\end{ques}

\begin{ques}\label{ques-2}
    When is the degree of the polynomial zero? In other words, when will relative and orbifold invariants coincide?
\end{ques}

\begin{ques}\label{ques-3}
    What can be said about other coefficients (coefficients of non-constant terms) of the polynomial?
\end{ques}

\subsection{Relative and orbifold Gromov--Witten invariants}
We review the definition for orbifold Gromov--Witten invariants of root stacks. We refer readers to  \cite{AV}, \cite{AGV02}, \cite{AGV}, \cite{CR} and \cite{Tseng} for the foundation of orbifold Gromov--Witten theory. The construction of root stacks can be found in \cite{AGV}*{Appendix B} and \cite{Cadman}. See also \cite{FWY19}*{Section 2.3} for the definition of orbifold Gromov--Witten invariants of root stacks that we will consider here.

Given a smooth projective variety $X$ with a smooth effective divisor $D$, let $\vec k=(k_1,\ldots,k_m)$ be a vector of $m$ nonzero integers which satisfy 
\[
\sum_{i=1}^m k_i=\int_d[D].
\]
The number of negative elements in $\vec k$ is denoted by $m_-$. 

We assume that $r>|k_i|$ for all $i\in\{1,\ldots, m\}$. We consider the moduli space $\overline{M}_{g,\vec k, n,d}(X_{D,r})$ of $(m+n)$-pointed, genus $g$, degree $d\in H_2(X,\mathbb Q)$, orbifold stable maps to $X_{D,r}$ where 
\begin{itemize}
\item the $i$-th orbifold marking maps to the twisted sector of the inertia stack of $X_{D,r}$ with age $k_i/r$ if $k_i>0$; 
\item the $i$-th orbifold marking maps to the twisted sector of the inertia stack of $X_{D,r}$ with age $(r+k_i)/r$ if $k_i<0$.
\end{itemize}
Let
\begin{itemize}
    \item $\delta_i\in H^*(D,\mathbb Q)$ for $1\leq i \leq m$;
    \item $\gamma_{m+i}\in H^*(X,\mathbb Q)$ for $1\leq i \leq n$
    \item $a_i\in \mathbb Z_{\geq 0}$, for $1\leq i \leq m+n$.
\end{itemize}
Then orbifold Gromov--Witten invariants of $X_{D,r}$ are defined as

\begin{align}\label{orb-inv}
&\left\langle \prod_{i=1}^m\tau_{a_{i}}(\delta_{i})\prod_{i=1}^n \tau_{a_{m+i}}(\gamma_{m+i})\right\rangle^{X_{D,r}}_{g,\vec k,n,d}:=\\
\notag &\int_{[\overline{M}_{g,\vec k,n,d}(X_{D,r})]^{vir}}\bar{\psi}_1^{a_1}\on{ev}^*_{1}(\delta_{1})\cdots \bar{\psi}_m^{a_m}\on{ev}^*_{m}(\delta_{m})\bar{\psi}_{m+1}^{a_{m+1}}\on{ev}^{*}_{m+1}(\gamma_{m+1})\cdots\bar{\psi}_{m+n}^{a_{m+n}}\on{ev}^{*}_{m+n}(\gamma_{m+n}),
\end{align}
where the descendant class $\bar{\psi}_i$ is the class pullback from the corresponding descendant class on the moduli space $\overline{M}_{g,m+n,d}(X)$ of stable maps to $X$. 

Let 
\begin{align}\label{rel-inv}
\left\langle \left.\prod_{i=1}^m\tau_{a_{i}}(\delta_{i})\right|\prod_{i=1}^n \tau_{a_{m+i}}(\gamma_{m+i})\right\rangle^{(X,D)}_{g,\vec k,n,d}
\end{align}
be the corresponding relative Gromov--Witten invariants of $(X,D)$ with contact orders $k_i$ at the $i$-th relative marking, for $1\leq i\leq m$. When $m_-=0$, invariants (\ref{rel-inv}) are simply the relative Gromov--Witten invariants without negative contact orders defined in \cite{LR}, \cite{IP}, \cite{Li1} and \cite{Li2}. When $m_->0$, invariants (\ref{rel-inv}) are the relative Gromov--Witten invariants with negative contact orders defined in \cite{FWY} and \cite{FWY19}. Note that descendant classes in (\ref{rel-inv}) are also defined as classes pullback from the corresponding descendant classes on the moduli space $\overline{M}_{g,m+n,d}(X)$ of stable maps to $X$.

The relation between relative and orbifold Gromov--Witten invariants is as follows.
\begin{thm}[\cite{FWY19}*{Theorem 3.5}]\label{thm:rel-orb}
For $r$ sufficiently large, 
\begin{align}\label{orb-inv-times-m-}
r^{m_-}\left\langle \prod_{i=1}^m\tau_{a_{i}}(\delta_{i})\prod_{i=1}^n \tau_{a_{m+i}}(\gamma_{m+i})\right\rangle^{X_{D,r}}_{g,\vec k,n,d}
\end{align}
is a polynomial in $r$ and the constant term of the polynomial is the corresponding relative Gromov--Witten invariant (\ref{rel-inv}).
\end{thm}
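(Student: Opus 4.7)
The plan is to adapt the degeneration-localization strategy of \cite{TY18} to the setting with negative contact orders. First I would apply the orbifold degeneration formula to the degeneration of $X_{D,r}$ into $X\cup_D P_r$, where $P_r$ denotes the $r$-th root stack of the projective completion $\mathbb{P}(N_{D/X}\oplus \sO_D)$ along its infinity section $D_\infty$. Since the root construction is trivial on the $X$-component away from $D$, the degeneration formula expresses the orbifold invariant (\ref{orb-inv}) as a weighted sum, indexed by splitting types of the marking vector $\vec k$ and intermediate contact data, of products of relative invariants of $(X,D)$ (without orbifold markings) and orbifold relative invariants of $(P_r,D_\infty)$. The insertions $\delta_i$ and $\gamma_{m+i}$ are distributed to the two sides according to the splitting.

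Next I would apply virtual localization on the $P_r$-factor for the fiberwise $\GM$-action scaling the normal direction. The fixed loci are described by bipartite graphs whose vertices correspond to stable components mapping to the zero section $D_0\cong D$ or to $D_\infty$, with edges corresponding to fiber components of $P_r\to D$ (which are footballs with orbifold structure at $0$ and $\infty$). Each edge contributes an equivariant integral over the moduli space of maps to a root gerbe fiber, and each vertex over $D_\infty$ contributes a rubber integral which by \cite{FWY}, \cite{FWY19} is identified with a relative invariant with negative contact orders. The $r$-dependence enters through: the equivariant Euler class of the normal bundle at each twisted node (which contains one factor of $r$), the $\psi$-class relations coming from the gerbe structure along each fiber, and the orbifold markings of age $(r+k_i)/r$, each of which contributes a factor of $1/r$ from automorphisms of the banded $\mu_r$-gerbe over $D$.

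The heart of the argument is then showing that, after multiplying by $r^{m_-}$, the combined $r$-dependence of each localization contribution is polynomial in $r$ for $r$ sufficiently large. One proves this by expanding each inverse Euler class as a power series in $1/r$ and checking, using the age conditions and the twisted node relations, that no negative powers of $r$ survive in the sum. The factor $r^{m_-}$ is precisely what is needed to cancel the $r^{-m_-}$ coming from the $m_-$ markings of age $(r+k_i)/r$. For the constant term, the only fixed locus that contributes at $r=\infty$ (after multiplication by $r^{m_-}$) is the one where no component maps into $D_\infty$ and every edge is trivial, which gives back the relative invariant (\ref{rel-inv}) — with negative contact orders interpreted as in \cite{FWY}, \cite{FWY19}.

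The main obstacle is step (3), tracking the precise cancellation of negative powers of $r$ across fixed loci that involve several markings of negative age and rubber components over $D_\infty$. Controlling the rubber integrals and matching them with the definition of relative invariants with negative contact orders is the technically delicate point, and this is where the results of \cite{FWY}, \cite{FWY19} on negative contact orders are indispensable. A secondary subtlety is ensuring the orbifold degeneration formula applies verbatim to root stacks, which is handled using the framework of \cite{ACW} and the moduli of twisted stable maps.
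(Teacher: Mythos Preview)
The paper does not give its own proof of this theorem: it is quoted verbatim from \cite{FWY19}*{Theorem~3.5} and used as input for the results that follow. The subsection heading ``Proof of Theorem~\ref{thm:rel-orb}'' in Section~2.2 is a mislabel; that subsection in fact proves Theorem~\ref{thm:deg} (the degree bound), and the proof environment inside it is explicitly ``Proof of Theorem~\ref{thm:deg}.'' So there is no proof here to compare against directly, though the paper's proof of Lemma~\ref{lemma-deg-large-age} does recapitulate the skeleton of the \cite{FWY19} argument while also tracking $r$-degrees.

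Your overall architecture---degenerate $X_{D,r}$ to $X\cup_D Y_{D_0,r}$, then localize on the $Y_{D_0,r}$-factor---is the same framework the paper (and \cite{TY18}, \cite{FWY19}) uses. But step~(3) of your plan has a genuine gap. Polynomiality does \emph{not} follow from ``expanding each inverse Euler class as a power series in $1/r$ and checking that no negative powers survive.'' The vertex contribution over $\mathcal D_r$ involves $c_i(-R^\bullet\pi_*\mathcal L)$, and the essential input is the nontrivial fact (\cite{JPPZ18}*{Corollary~11}) that $r^{2i-2g(v)+1}\epsilon^{\on{orb}}_*c_i(-R^\bullet\pi_*\mathcal L)$ is a polynomial in $r$, proved via Grothendieck--Riemann--Roch and an Ehrhart-polynomial argument (cf.\ Lemma~\ref{lemma-deg-2i}). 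When $m_->0$ one further needs \cite{FWY19}*{Corollary~4.2}, whose proof---as sketched in Lemma~\ref{lemma-deg-large-age} here---passes through an identification of the equivariant cycle with a non-equivariant family cycle over $\mathbb P^{N-1}$; this is not elementary bookkeeping of $r$-powers. Your explanation of where $r^{m_-}$ enters (gerbe automorphisms at the markings) is also not how the papers track it: in formula~(\ref{loc-contr-0}) the exponent $m_-(v)$ appears directly in the power of $t/r$ in the vertex contribution, and its cancellation is entangled with the polynomiality of the classes above. Finally, the constant term is not isolated by a single ``$r=\infty$'' fixed locus; in \cite{TY18} and \cite{FWY19} it is extracted by matching the full localization expansion against that of the relative theory.
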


\begin{rmk}\label{rmk:rel-orb}
When $m_-=0$, Theorem \ref{thm:rel-orb} specializes to \cite{TY18}*{Theorem 1.5}. When $g=0$ and $m_->0$, \cite{FWY}*{Theorem 1.1} states that (\ref{orb-inv-times-m-}) is constant in $r$ and equals to the corresponding relative Gromov--Witten invariant (\ref{rel-inv}). When $g=0$ and $m_-=0$, \cite{FWY}*{Theorem 1.1} specializes to \cite{ACW}*{Theorem 1.2.1}.
\end{rmk}

\subsection{Main results}

Question \ref{ques-1}, Question \ref{ques-2} and Question \ref{ques-3} naturally arise from Theorem \ref{thm:rel-orb}. The answer to Question \ref{ques-1} is the following.

\begin{thm}\label{thm:deg}
For $g>0$, the degree of the polynomial 
\[
r^{m_-}\left\langle \prod_{i=1}^m\tau_{a_{i}}(\delta_{i})\prod_{i=1}^n \tau_{a_{m+i}}(\gamma_{m+i})\right\rangle^{X_{D,r}}_{g,\vec k,n,d}
\]
in $r$ is bounded by $2g-1$.
\end{thm}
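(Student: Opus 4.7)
The approach follows the degeneration-plus-localization method of \cite{TY18} used to establish polynomiality in $r$, refined so as to track the polynomial degree at each stage. As a first step I would degenerate $X$ to the normal cone of $D$, placing the $r$-th root structure on the ruled component $Y := \Pp(\sO_D \oplus N_{D/X})$, with zero section $D_0$ carrying the orbifold structure and infinity section $D_\infty$ untwisted. By the orbifold degeneration formula, (\ref{orb-inv-times-m-}) decomposes as a sum of products of relative invariants of $(X,D)$ (which are independent of $r$) with orbifold invariants of $(Y_{D_0,r}, D_\infty)$, so it suffices to bound the $r$-degree of the latter.

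Next, I would apply $\C^*$-virtual localization for fiber rotation on $Y_{D_0,r}$, as in \cite{TY18}*{Section 4}. The fixed loci are parameterized by decorated bipartite graphs whose vertices label sub-maps contracted to $D_0$ (contributing Hurwitz--Hodge integrals on the orbifold side) or to $D_\infty$ (contributing ordinary, $r$-independent invariants), and whose edges label multiple covers of $\Pp^1$-fibers. The $r$-dependence is then isolated in (i) the prefactor $r^{m_-}$, (ii) automorphism factors at orbifold nodes and markings, (iii) $\C^*$-equivariant Euler classes of $N_{D_0/Y_{D_0,r}}$ at edge flags, and (iv) polynomial $r$-dependence inside the Hurwitz--Hodge vertex integrals at $D_0$. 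Pieces (i)--(iii) are bookkept directly; the main input is (iv). At a $D_0$-vertex of genus $h$, expanding the inverse equivariant Euler class of the virtual obstruction bundle produces a polynomial in $r$ whose coefficients pair with Chern characters $\on{ch}_k(\mathbb{E})$ of the Hodge bundle on the vertex moduli. Mumford's relation $c(\mathbb{E})c(\mathbb{E}^\vee)=1$ forces $\on{ch}_{2k}(\mathbb{E})=0$ for $k\geq 1$, so only the odd characters $\on{ch}_1,\on{ch}_3,\ldots,\on{ch}_{2h-1}$ contribute non-trivially. Consequently a single vertex of genus $h$ gives polynomial $r$-degree at most $2h-1$. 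Summing over vertices using $\sum_v h_v+(\#\text{loops in the graph})=g$, and combining with (i)--(iii), produces the claimed total $r$-degree bound of $2g-1$.

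The main obstacle will be attaining the sharp bound $2g-1$ rather than a naive $2g$. One must verify that the vertex-wise estimate $2h-1$ combines additively across the localization graph, including loop contributions, and that $D_\infty$-vertices, edge factors, and node smoothing terms cannot restore the missing unit. I expect this to require a global application of Mumford's relation together with delicate cancellations among edge and vertex contributions in the localization expression.
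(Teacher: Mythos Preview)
Your overall architecture---degeneration to the normal cone followed by $\C^*$-localization on $(Y_{D_0,r},D_\infty)$, with the $r$-dependence isolated in the $D_0$-vertex contributions---matches the paper exactly. The gap is in step (iv): the bundle appearing in the $D_0$-vertex contribution is \emph{not} the Hodge bundle. From the localization formula (see \eqref{loc-contr-0}), the relevant class is $c_i(-R^\bullet\pi_*\mathcal L)$, where $\mathcal L$ is the universal $r$-th root on the universal curve over $\overline{M}_{g(v),\on{val}(v),d(v)}(\mathcal D_r)$. This bundle, and indeed the moduli space on which it lives, depends on $r$ in an essential way, and Mumford's relation $c(\mathbb E)c(\mathbb E^\vee)=1$ has no analogue here that forces the even Chern characters to vanish. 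So the mechanism you propose for producing the ``$-1$'' in $2h-1$ does not operate.

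What the paper does instead is push $c_i(-R^\bullet\pi_*\mathcal L)$ forward to the $r$-independent moduli space and study the resulting class $\hat c_i$ via the Chiodo/JPPZ Grothendieck--Riemann--Roch formula of \cite{JPPZ18}. In that formula the $r$-dependence enters through a sum over $r$-twistings on each stable graph $\Gamma$; these twistings are the lattice points of an $r$-dilated polytope of dimension $h^1(\Gamma)$, and an Ehrhart-polynomial argument (Lemma~\ref{lem:gen_prop}) then bounds the degree of the sum. This yields the vertex bound $2g(v)-1$ (Lemma~\ref{lemma-deg-2i}), after which the additive combination over the graph that you anticipate goes through without the ``delicate cancellations'' you worry about (Proposition~\ref{prop-deg-small-age}). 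There is also a second issue you do not address: when $m_->0$ the range of $i$ needed in $c_i(-R^\bullet\pi_*\mathcal L)$ extends beyond $g(v)$, where Lemma~\ref{lemma-deg-2i} alone is too weak; the paper treats this separately (Lemma~\ref{lemma-deg-large-age}) by an equivariant-limit argument from \cite{FWY19}, bootstrapping from the $m_-=0$ case.
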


Question \ref{ques-2} is actually the same as \cite{TY18}*{Question 1.4}. We already know that relative and orbifold invariants are equal in the genus zero case for all smooth projective varieties and in all genera for stationary invariants of target curves. Here, we give a simple criterion for Question \ref{ques-2}.
\begin{thm}\label{thm:rel=orb}
If there are no maps with higher genus components of the source curve mapping into the divisor $D$, then relative and orbifold invariants coincide, for sufficiently large $r$.
\end{thm}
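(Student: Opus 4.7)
My plan is to track how $r$ enters the degeneration--localization argument behind Theorem~\ref{thm:rel-orb} and to show that the stated hypothesis collapses all $r$-dependence to the constant term. First I would degenerate $X$ to the normal cone $X\cup_D Y$, where $Y:=\Pp_D(N_{D/X}\oplus\sO_D)$, so that $X_{D,r}$ correspondingly degenerates to $X\cup_{\mathcal{D}_r}Y_r$, with $\mathcal{D}_r=D\times B\mu_r$ the gerbe and $Y_r$ the $r$-th root stack of $Y$ along the zero section $D_0$. The orbifold degeneration formula then rewrites the left-hand side of Theorem~\ref{thm:rel=orb} as a sum over admissible bipartite splittings of products of a relative/orbifold invariant of $(X,D)$ and an orbifold invariant of $(Y_r,D_\infty)$, with genera adding as $g=g_1+g_2+(\text{loops})$. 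Since the $(X,D)$ factor is manifestly independent of $r$, all $r$-dependence is localized to the $(Y_r,D_\infty)$ factor.

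Next I would invoke virtual localization on $Y_r$ with respect to the fiberwise $\GM$-action, following \cite{TY18}*{Section~4}. Every fixed locus decomposes the source into components over the root-stack gerbe $D_0$, components over $D_\infty$, and rubber covers of $\Pp^1$-fibers connecting them. Once the rubber and $D_\infty$ contributions are factored out, the entire $r$-dependence is concentrated in Hurwitz--Hodge integrals over moduli spaces of twisted stable maps to $B\mu_r\times D$ coming from the $D_0$ end. The hypothesis that no map has a higher-genus component of its source mapping into $D$ then propagates through the degeneration--localization dictionary to force every source component over $D_0$ to have genus zero: a positive-genus such component, after gluing, would yield a stable map in $\overline{M}_{g,\vec k,n,d}(X_{D,r})$ with a positive-genus component mapped into the gerbe $\mathcal{D}_r$, contradicting the assumption.

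Consequently every Hurwitz--Hodge integral arising from the localization sum is computed on a genus-zero moduli space, and the genus-zero polynomiality results of \cite{ACW} and \cite{FWY} (equivalently the $g=0$ case of Theorem~\ref{thm:rel-orb}) guarantee that each such integral is constant in $r$. Summing the contributions, the polynomial of Theorem~\ref{thm:rel-orb} is itself constant in $r$, and the constant-term statement of that theorem identifies its value with the relative invariant. The main obstacle will be the genus-tracking step above, namely the dictionary assertion that a positive-genus contribution on the $Y_r$ side at $D_0$ really does correspond to a positive-genus component of the source mapped into $D$ inside $X_{D,r}$; this requires a careful accounting of the bipartite graph combinatorics and the cutting/gluing behavior of orbifold stable maps under degeneration, modeled on analogous arguments already present in \cite{TY18} and \cite{FWY}.
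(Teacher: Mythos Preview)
Your proposal is correct and follows essentially the same route as the paper: degenerate to $X\cup_D Y_{D_0,r}$, apply virtual localization on the $Y_r$ side, use the hypothesis to force $g(v)=0$ for every vertex over $D_0$, and then reduce to the known genus-zero constancy results from \cite{TY18}*{Section~4} and \cite{FWY}. The paper's own proof is a terse paragraph making exactly this argument, so your more detailed outline (including the genus-tracking caveat you flag) is a faithful expansion of it.
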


\begin{rmk}
Theorem \ref{thm:rel-orb} can be understood as follows. The moduli space of relative stable maps from smooth curves was compactified by \cite{Li1} and \cite{Gathmann} to define relative Gromov--Witten invariants in the algebraic setting. On the other hand, according to Cadman \cite{Cadman}, the moduli space of orbifold stable maps to the root stack $X_{D,r}$ gives an alternative compactification of the moduli space of relative stable maps with smooth source curves. The discrepancy between relative and orbifold invariants should be a result of different compactifications of the moduli space. Moreover, by \cite{ACW}*{Theorem 1.2.1} and \cite{FWY}*{Theorem 1.1}, genus zero relative and orbifold invariants coincide for sufficiently large $r$. Therefore, the discrepancy in higher genus should be related to stable maps with the higher genus components that map into the divisor $D$.
\end{rmk}

Now we consider Question \ref{ques-3}. By Theorem \ref{thm:rel-orb} and Remark \ref{rmk:rel-orb}, we know that the constant term of the polynomial (\ref{orb-inv-times-m-}) is the corresponding genus $g$ relative invariant and in the genus zero case, the polynomial is just constant. Therefore, it is natural to expect that the coefficients of the non-constant terms are related to relative invariants of lower genera.  
We have the following result.

\begin{thm}\label{thm:coeff}
The coefficients of the non-constant terms of the polynomial 
\[
r^{m_-}\left\langle \prod_{i=1}^m\tau_{a_{i}}(\delta_{i})\prod_{i=1}^n \tau_{a_{m+i}}(\gamma_{m+i})\right\rangle^{X_{D,r}}_{g,\vec k,n,d}
\]
are given by sum over products of lower-genus relative Gromov--Witten invariants of $(X,D)$ and absolute Gromov--Witten invariants of $D$. More specifically, Let $0<j\leq 2g-1$ and $g_0$ be the largest integer such that $j> 2g_0-1$, then the $r^j$-coefficient of the polynomial is determined by relative Gromov--witten invariants of $(X,D)$ of genus less than $(g-g_0)$ and absolute Gromov--Witten invariants of $D$.
\end{thm}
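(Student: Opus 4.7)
My approach parallels those used for Theorem \ref{thm:rel-orb} and Theorem \ref{thm:deg}: combine the degeneration to the normal cone with virtual localization on the root-stack side, and track how the $r$-dependence distributes across the resulting strata.

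First I would apply the orbifold degeneration formula to $X_{D,r} \rightsquigarrow X \cup_D P_r$, where $P_r$ denotes the root stack of $P := \Pp_D(N_{D/X} \oplus \sO)$ along the infinity section $D_\infty$. This expresses $r^{m_-}\left\langle \cdots \right\rangle^{X_{D,r}}_{g,\vec k,n,d}$ as a sum over bipartite splitting graphs $\Gamma$ of products of a relative invariant of $(X,D)$ and a relative invariant of $(P_r, D_\infty)$ (with negative contact orders allowed on either side). The $(X,D)$-factor is manifestly independent of $r$, so all of the $r$-dependence is concentrated in the $(P_r, D_\infty)$-factor.

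Next I would apply $\C^*$-virtual localization on $P_r$ with respect to the fiberwise action, following \cite{TY18}*{Section 4}. The fixed loci are indexed by decorated graphs whose vertices lie over $D_0$ or $D_\infty$. Because the root construction is supported at $D_\infty$, vertex contributions over $D_0$ reduce to absolute GW invariants of $D$, while vertex contributions over $D_\infty$ reduce to tautological integrals on $\bM_{g',n'}(D)$ involving $\bar\psi$-classes and Chern classes of the twisted normal bundle of $D_\infty \subset P_r$, whose first Chern class has degree $-1/r$. By the very argument used to prove Theorem \ref{thm:deg}, the polynomial degree in $r$ of a connected $(P_r,D_\infty)$-factor of arithmetic genus $g_2$ is at most $2g_2 - 1$.

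Finally, the degeneration genus relation $g = g_1 + g_2 + h - 1$, where $g_1$ (resp.\ $g_2$) is the arithmetic genus on the $(X,D)$-side (resp.\ $(P_r,D_\infty)$-side) of $\Gamma$ and $h \geq 1$ is the number of edges of $\Gamma$, shows that an $r^j$-contribution with $j > 2g_0 - 1$ can only come from graphs with $g_2 \geq g_0 + 1$, whence $g_1 \leq g - g_0 - h \leq g - g_0 - 1 < g - g_0$. Each such summand is therefore a product of a relative GW invariant of $(X,D)$ of genus strictly less than $g - g_0$ and a tautological integral reducible to absolute GW invariants of $D$, which is exactly the statement to be proved. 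The main obstacle will be the second step: one must verify that after $\C^*$-localization on $P_r$ the vertex contributions over $D_\infty$ factor cleanly as an absolute GW integral on $D$ times an $r$-polynomial Hodge factor whose degree is controlled precisely by $2g_2 - 1$ (rather than inflated by valence, moduli dimension, or the orbifold structure at $D_\infty$), so that the genus bookkeeping in the final step produces the stated bound.
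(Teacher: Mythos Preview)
Your plan is essentially the paper's own proof: degenerate $X_{D,r}$ to $X$ glued to the rooted $\mathbb{P}^1$-bundle, note that the $(X,D)$-factor is $r$-independent, invoke the $2g'-1$ degree bound (Proposition~\ref{prop:deg}) on the bundle factor, and run the genus bookkeeping $g=g_1+g_2+h-1$ to force the $(X,D)$-genus below $g-g_0$. Two small slips to correct: if the root is along $D_\infty$ then the relative divisor of $P_r$ (where you glue to $X$) must be $D_0$, not $D_\infty$, and in this degeneration the $(X,D)$-side only sees \emph{positive} contact orders; also, the paper's final reduction of the bundle side to absolute Gromov--Witten invariants of $D$ goes through quantum Riemann--Roch (Coates--Givental, Tseng) and gerbe duality (Andreini--Jiang--Tseng, Tang--Tseng) rather than a direct Hodge-factor analysis.
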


One motivation for studying the polynomiality of orbifold Gromov--Witten theory of root stacks is to use it to better understand relative Gromov--Witten theory. The relation between relative and orbifold Gromov--Witten invariants has led to lots of structural properties of relative Gromov--Witten theory, such as relative quantum cohomology, WDVV equation, topological recursion relation, Givental's formalism and genus zero Virasoro constraint (see \cite{FWY}*{Section 7}).  In \cite{FWY19}*{Section 3.5}, it was shown that relative Gromov--Witten theory is a partial cohomological field theory (partial CohFT) in the sense of \cite{LRZ}, that is,  a CohFT without the loop axiom. On the other hand, orbifold Gromov--Witten theory is a CohFT. The results in our paper may provide a hint for a replacement of the loop axiom, which is also related to Givental's quantization and Virasoro constraint for relative Gromov--Witten theory.

\subsection{Acknowledgement}
We thank Dhruv Ranganathan, Jonathan Wise and Dimitri Zvonkine for important discussions on reduced invariants and the degree of the polynomial. H.-H. T. is supported in part by Simons foundation collaboration grant. F. Y. is supported by a postdoctoral fellowship of NSERC and the Department of Mathematical and Statistical Sciences at the University of Alberta and a postdoctoral fellowship for the Thematic Program on Homological Algebra of Mirror Symmetry at the Fields Institute for Research in Mathematical Sciences.

\section{The degree of the polynomial}

\subsection{Virtual localization formula}

Let $L$ be a line bundle over $D$ and $Y$ be the total space of the $\mathbb P^1$-bundle
\[
\pi: \mathbb P^1(\mathcal O_D\oplus L)\rightarrow D.
\]
The zero and infinity divisors of $Y$ are denoted by $D_0$ and $D_\infty$. We apply the $r$-th root construction to $D_0$ to obtain the root stack $Y_{D_0,r}$. The zero and infinity divisors of $Y_{D_0,r}$ are denoted by $\mathcal D_r$ and $D_\infty$.

Following \cite{TY18}, we can first study the degree of the polynomial for orbifold-relative Gromov--Witten invariants of $(Y_{D_0,r},D_\infty)$. Then the degree of the orbifold Gromov--Witten invariants of $X_{D,r}$ follows from it by the degeneration formula and setting $L=N_D$, the normal bundle to $D$ in $X$. 

We consider the moduli space $\overline{M}_{g,\vec k,n,\vec \mu,d}(Y_{D_0,r},D_\infty)$ of orbifold-relative stable maps with prescribed orbifold and relative conditions given by $\vec k$ and $\vec \mu$ respectively. Let $\epsilon^{\on{orb}}$ be the
forgetful map 
\[
\epsilon^{\on{orb}}:\overline{M}_{g,\vec k,n,\vec \mu,d}(Y_{D_0,r}, D_\infty) \rightarrow \overline{M}_{g,m+n+l(\mu),d}(Y) 
\]
that forgets relative and orbifold conditions, where $l(\mu)$ is the length of $\vec \mu$, that is, the number of elements in $\vec \mu$.

The Gromov--Witten invariants of $(Y_{D_0,r},D_\infty)$ are computed via the virtual localization formula in \cite{JPPZ18}, \cite{TY18} and \cite{FWY19}. The polynomiality follows from the study of the vertex contribution over $D_0$. We refer readers to  \cite{JPPZ18} and \cite{TY18} for details of the virtual localization formula. In particular, we refer to \cite{TY18}*{Section 3.2.1} for the notation of decorated graphs.
\begin{lemma}\label{lemma-loc-formula}
The virtual localization formula can be written as follows;
\begin{align}\label{localization-formula}
[\overline{M}_{g,\vec k,n,\vec \mu,d}(Y_{D_0,r}, D_\infty)]^{\on{vir}}=
\sum_{\Gamma}\frac{1}{|\on{Aut}(\Gamma)|\prod_{e\in E(\Gamma)}d_e} \cdot\iota_*\left(\frac{[\overline{M}_{\Gamma}]^{\on{vir}}}{e(\on{Norm}^{\on{vir}})}\right),
\end{align}
where the sum is taken over decorated graphs $\Gamma$ and $e(\on{Norm}^{\on{vir}})$ is the Euler class of the virtual normal bundle.
\end{lemma}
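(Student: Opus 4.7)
The plan is to derive (\ref{localization-formula}) as an instance of the Graber--Pandharipande virtual localization formula, applied to a natural $\GM$-action on $Y_{D_0,r}$. Since the lemma essentially transcribes formulas already established in \cite{JPPZ18}, \cite{TY18}*{Section 3.2}, and \cite{FWY19} into the present notation, the proof is largely an organization of those references rather than a novel computation.

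First, I would introduce the $\GM$-action on $Y = \mathbb P^1(\mathcal O_D\oplus L)$ obtained from fiberwise scaling of the $L$-summand. This action fixes the divisors $D_0$ and $D_\infty$ and lifts canonically to the root stack $Y_{D_0,r}$, inducing a $\GM$-action on $\overline{M}_{g,\vec k,n,\vec \mu,d}(Y_{D_0,r},D_\infty)$ that preserves both the orbifold and the relative structures.

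Next, I would classify the $\GM$-fixed loci. A fixed orbifold-relative stable map decomposes into: (i) subcurves contracted into the gerbe $\mathcal D_r$ over $D_0$ carrying the orbifold markings and incident half-edges; (ii) subcurves contracted into $D_\infty$ carrying the relative markings $\vec \mu$; and (iii) rational components covering fibers of $\pi$ at prescribed degrees $d_e$. This combinatorial data is precisely the decorated graphs $\Gamma$ of \cite{TY18}*{Section 3.2.1}: vertices (contributing moduli $\overline{M}_\Gamma$) for the contracted subcurves and edges (weighted by $d_e$) for the fiber-covering $\mathbb P^1$s. The fixed locus is thus $\sqcup_\Gamma \iota(\overline{M}_\Gamma)/\on{Aut}(\Gamma)$.

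Finally, the virtual localization formula assembles these contributions into (\ref{localization-formula}). The automorphism factor $|\on{Aut}(\Gamma)|$ accounts for overcounting from the product parametrization; the factor $1/d_e$ comes from the $\Z/d_e$-automorphisms of a degree-$d_e$ cover of a fiber rigidified at its two special points; and division by $e(\on{Norm}^{\on{vir}})$ is standard. The only point requiring care is verifying that the formulation is insensitive to the presence of negative contact orders ($m_->0$), where the relevant orbifold markings sit at $\mathcal D_r$ on which the $\GM$-action is trivial; this is precisely the setting of \cite{FWY19}, and there the fixed-locus analysis and contribution calculation go through verbatim.
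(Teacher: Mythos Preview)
Your proposal is correct and matches the paper's own treatment: the paper does not give a separate proof of this lemma but simply records the formula by reference to \cite{JPPZ18}, \cite{TY18}*{Section 3.2}, and \cite{FWY19}, exactly the sources you invoke. If anything, you supply more detail than the paper does by spelling out the $\GM$-action, the fixed-locus decomposition, and the origin of the automorphism and $1/d_e$ factors.
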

The inverse of the Euler class of virtual normal bundle can be written as follows:
\begin{itemize}
\item for each stable vertex $v$ over the zero section, there is a factor
\begin{align}\label{loc-contr-0}
&\left(\prod_{e\in E(v)}\frac{rd_e}{t+\on{ev}_{e}^*c_1(L)-d_e\bar{\psi}_{(e,v)}}\right)\cdot\left(\sum_{i=0}^{\infty}(t/r)^{g(v)-1+|E(v)|-i+m_-(v)}c_i(-R^\bullet\pi_*\mathcal L)\right)\\
\notag =& t^{-1}\left(\prod_{e\in E(v)}\frac{d_e}{1+(\on{ev}_{e}^*c_1(L)-d_e\bar{\psi}_{(e,v)})/t}\right)\cdot\left(\sum_{i=0}^{\infty}t^{g(v)-i+m_-(v)}(r)^{i-g(v)+1-m_-(v)}c_i(-R^\bullet\pi_*\mathcal L)\right)\\
\notag=& t^{-1}\left(\prod_{e\in E(v)}\frac{d_e}{1+(\on{ev}_{e}^*c_1(L)-d_e\bar{\psi}_{(e,v)})/t}\right)\cdot\left(\sum_{i=0}^{\infty}(tr)^{g(v)-i+m_-(v)}(r)^{2i-2g(v)+1-2m_-(v)}c_i(-R^\bullet\pi_*\mathcal L)\right),
\end{align}
where 
\[
\pi: \mathcal C_{g(v),\on{val}(v),d(v)}(\mathcal D_r)\rightarrow \overline{M}_{g(v),\on{val}(v),d(v)}(\mathcal D_r)
\]
 is the universal curve, 
\[
\mathcal L\rightarrow \mathcal C_{g(v),\on{val}(v),d(v)}(\mathcal D_r)
\] 
is the universal $r$-th root. 
\item If the target expands over the infinity section, there is a factor
\begin{align}\label{loc-contr-infinity}
\frac{\prod_{e\in E(\Gamma)}d_e}{-t-\psi_\infty}.
\end{align}
\end{itemize}
The contributions in (\ref{loc-contr-0}) and (\ref{loc-contr-infinity}) are all the contributions that appear in the localization formula. We consider the pushforward to $\overline{M}_{g,m+n+l(\mu),d}(Y)$ by $\epsilon^{\on{orb}}$. We define
\[
\hat{c}_i:=r^{2i-2g(v)+1}\epsilon^{\on{orb}}_* c_i(-R^\bullet\pi_*\mathcal L).
\] 
\cite{JPPZ18}*{Corollary 11} states that, for each $i\geq 0$, the class $\hat{c}_i$ is a polynomial in $r$ when $r$ is sufficiently large. Now, we show the following degree bound for the polynomial.

\begin{lemma}\label{lemma-deg-2i}
The degree of the polynomial $\hat{c}_i$ is bounded by $2i$ when $r$ is a sufficiently large integer.
\end{lemma}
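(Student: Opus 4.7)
Since $\hat c_i = r^{2i-2g(v)+1}\,\epsilon^{\on{orb}}_*c_i(-R^\bullet\pi_*\mathcal L)$, the claim that $\hat c_i$ has degree at most $2i$ in $r$ is equivalent to the sharper statement that $\epsilon^{\on{orb}}_*c_i(-R^\bullet\pi_*\mathcal L)$, viewed as a Laurent polynomial in $r$, has top degree at most $2g(v)-1$. The plan is to prove this by refining the polynomiality already established in \cite{JPPZ18}*{Corollary 11} to an explicit degree bound, rather than reproving polynomiality from scratch.

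The main input is Chiodo's formula for $c(-R^\bullet\pi_*\mathcal L)$, which expresses the total Chern class as the exponential of a sum of tautological classes on $\overline{M}_{g(v),\on{val}(v),d(v)}(\mathcal D_r)$ --- kappa classes, $\bar\psi$ classes at markings, and boundary pushforwards at nodes --- weighted by Bernoulli polynomials $B_{j+1}(\alpha)$ evaluated at rational age parameters $\alpha=a/r$. Each such Bernoulli polynomial expands as a polynomial in $1/r$ of degree $j+1$, so after extracting the codimension-$i$ piece one obtains a Laurent polynomial in $r$ with tautological coefficients. I would then push forward along $\epsilon^{\on{orb}}$ in two stages: first, orbifold $\bar\psi$ and kappa classes match their counterparts on the rigidified moduli of stable maps to $D$ up to boundary corrections already encoded in Chiodo's formula; second, integration along the $\mu_r$-gerbe multiplies the result by an explicit power of $r$ coming from the comparison between the orbifold and rigidified moduli stacks. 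Assembling these steps term by term produces a polynomial in $r$ whose degree can, in principle, be read off.

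The main obstacle is the bookkeeping of $r$-powers across strata: Bernoulli-polynomial expansions introduce negative powers of $r$ while the gerbe pushforward introduces positive ones, and one must control the cancellation to verify uniformly that no surviving monomial of $\epsilon^{\on{orb}}_*c_i(-R^\bullet\pi_*\mathcal L)$ exceeds $r$-degree $2g(v)-1$. I expect an induction on the genus of $v$ and on the number of nodes in the dual graph of the source curve, exploiting the standard node-splitting of orbifold moduli spaces to reduce to single-vertex contributions, to make the bound transparent; at a single vertex the maximum $r$-power can be read off directly from the degrees of the Bernoulli polynomials involved and from the rank of the Hodge bundle, which is $g(v)$. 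Once the per-vertex bound $\deg_r\bigl(\epsilon^{\on{orb}}_*c_i\bigr)\le 2g(v)-1$ is proved, multiplication by $r^{2i-2g(v)+1}$ immediately yields the claimed bound $\deg_r\hat c_i\le 2i$.
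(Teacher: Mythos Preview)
Your starting point---Chiodo's GRR formula for $c(-R^\bullet\pi_*\mathcal L)$---is exactly the input the paper uses, and your reformulation of the goal as a bound $\deg_r\bigl(\epsilon^{\on{orb}}_*c_i\bigr)\le 2g(v)-1$ is correct. But there is a gap in your plan for tracking the positive powers of $r$.

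You describe the positive $r$-powers as arising because ``integration along the $\mu_r$-gerbe multiplies the result by an explicit power of $r$.'' This is not quite the mechanism. The map $\epsilon^{\on{orb}}$ is not simply a gerbe over boundary strata: over a stratum indexed by a stable graph $\Gamma$, the pushforward involves a \emph{sum over $r$-twistings}, i.e.\ over the $r^{h^1(\Gamma)}$ compatible assignments of monodromy at the nodes of $\Gamma$. It is this sum, not a single monomial factor, that produces positive powers of $r$, and your proposed node-splitting induction does not explain how to control it---the $r$-twistings are constrained by balancing at each vertex, so the sum does not factor as a product over edges or vertices, and ``reducing to single-vertex contributions'' does not isolate the source of the positive $r$-degree.

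The paper's argument avoids induction entirely. One fixes a stable graph $\Gamma$ in the GRR expansion and observes two things: (i) apart from a global prefactor $r^{2g(v)-1-h^1(\Gamma)}$, the $r$-twisting variables $tw$ enter the formula only through the ratios $tw/r$ inside the Bernoulli polynomials; and (ii) the $r$-twistings are the lattice points of the $r$-dilate of a fixed $h^1(\Gamma)$-dimensional integral polytope. By an Ehrhart-type principle (recorded as a separate lemma), summing a degree-$d$ polynomial in $tw$ over these lattice points yields a polynomial in $r$ of degree $d+h^1(\Gamma)$; since each such monomial also carries $r^{-d}$ from the $tw/r$ substitution, the net $r$-degree is $h^1(\Gamma)$. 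Multiplying by the global prefactor gives the uniform bound $2g(v)-1$, independent of $i$ and of $\Gamma$. Your induction could in principle reach the same conclusion, but it would still require this Ehrhart input at the inductive step (summing a polynomial over a range of length $r$), which is the piece currently missing from your sketch.
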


We will need the following general property, which we learned from D. Zvonkine, about Ehrhart polynomials. 
\begin{lemma}\label{lem:gen_prop}
Suppose we have an integer polynomial $P(r, a_1, ..., a_N)$ of total degree $d$. In the $N$-dimensional space with coordinates $a_1,..., a_N$ we pick an integral polytope  $\Delta$  of dimension $n$. For $r$ integers, summing the values of $P$ over the integer points of the rescaled polytope  $r\Delta$, then the outcome is a polynomial in  $r$  of degree  $n+d$. 
\end{lemma}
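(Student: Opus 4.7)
The plan is to reduce to the case of a pure monomial in $(r, a_1, \ldots, a_N)$ by linearity and then invoke weighted (``polynomial'') Ehrhart theory. Expand
$$
P(r, a_1, \ldots, a_N) = \sum_{(\nu_0,\beta)} c_{\nu_0,\beta}\, r^{\nu_0}\, a^\beta, \qquad \nu_0 + |\beta| \leq d,
$$
where $\beta = (\nu_1,\ldots,\nu_N)$ is a multi-index on the $a$-variables, so that
$$
\sum_{a \in r\Delta \cap \Z^N} P(r,a) \;=\; \sum_{(\nu_0,\beta)} c_{\nu_0,\beta}\, r^{\nu_0}\, S_\beta(r), \qquad S_\beta(r) := \sum_{a \in r\Delta \cap \Z^N} a^\beta.
$$
It therefore suffices to show that each $S_\beta(r)$ is a polynomial in $r$ of degree at most $n + |\beta|$, since the full sum then has degree at most $\max(\nu_0 + n + |\beta|) \leq n + d$.

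For the polynomiality of $S_\beta$, I would triangulate the lattice polytope $\Delta$ into $n$-dimensional lattice simplices and use inclusion--exclusion to reduce to the case of a single simplex $T$. On a simplex one argues by induction on $n$: the classical Ehrhart theorem provides the base case $\beta = 0$, and for general $\beta$ one applies forward differencing in a chosen coordinate direction, using that $a^\beta$ admits a polynomial discrete antiderivative, to telescope the sum over $rT$ into boundary sums supported on proper faces of $T$, to which the inductive hypothesis applies. Alternatively, one may invoke Brion's exponential-sum identity for lattice polytopes, which after $\xi$-differentiation yields a closed formula for $S_\beta(r)$ that is manifestly polynomial in $r$ for integer $r \geq 1$.

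To pin down the degree I would use a Riemann-sum estimate: substituting $y = a/r$ rewrites
$$
\frac{S_\beta(r)}{r^{n+|\beta|}} \;=\; \frac{1}{r^n}\sum_{y \in \Delta \cap \tfrac{1}{r}\Z^N} y^\beta,
$$
which is a Riemann sum of mesh $1/r$ on the $n$-dimensional affine subspace supporting $\Delta$, approaching the finite integral $\int_\Delta y^\beta\,dy$ as $r \to \infty$. Hence $S_\beta(r) = O(r^{n+|\beta|})$, and together with polynomiality this forces $\deg S_\beta \leq n + |\beta|$. The main obstacle is the polynomiality step: the classical Ehrhart theorem covers only the unweighted case $\beta = 0$, and the weighted generalization needs either the inductive triangulation-and-telescoping argument or a Brion-style toric identity to execute cleanly; the degree bound itself is then a soft Riemann-sum estimate.
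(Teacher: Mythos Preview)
The paper does not actually prove this lemma: it simply attributes the result to the literature, pointing to the introduction of \cite{bblkv} for a discussion and references. So there is no ``paper's own proof'' to compare against; your proposal supplies an argument where the paper supplies only a citation.

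Your outline is the standard route to weighted Ehrhart polynomiality and is correct. The reduction to monomials by linearity is immediate, and the degree bound via the Riemann-sum estimate is clean once polynomiality is known. You correctly identify the polynomiality of $S_\beta(r)$ as the only nontrivial step; either of your two suggested mechanisms (triangulate into lattice simplices and telescope via a discrete antiderivative, or invoke Brion's formula and differentiate) works, though neither is entirely effortless to execute in full detail. One small caution: the telescoping-on-a-simplex argument, as you phrase it, needs care because the ``rows'' in a general lattice simplex have endpoints that are themselves polynomials in $r$ only after a suitable change of coordinates, so the induction is really on the dimension of faces rather than directly on $n$. The Brion route avoids this bookkeeping at the cost of quoting a heavier theorem. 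Either way, the conclusion $\deg S_\beta \le n + |\beta|$ and hence $\deg \sum P \le n + d$ follows, which is what the paper uses.
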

A discussion and further references of Lemma \ref{lem:gen_prop} can be found in \cite[Introduction]{bblkv}.

\begin{proof}[Proof of Lemma \ref{lemma-deg-2i}]
Fix a stable graph $\Gamma$. From the definition it is not hard to see that $r$-twistings form lattice points of a polytope. We need to know its dimension is $h^1(\Gamma)$. This can be seen as follows. Applying Lemma \ref{lem:gen_prop} to the constant polynomial $P=1$, we get the cardinality of the set of $r$-twistings, which according to Lemma \ref{lem:gen_prop} should be a polynomial in $r$ of degree equal to the dimension of the polytope. On the other hand, the cardinality is $r^{h^1(\Gamma)}$ as noted in the bottom of \cite{JPPZ}*{Section 0.4.1}.

Therefore, if $f(tw)$ is an integer polynomial of degree $d$ in the $r$-twisting variables $tw$, then the sum $$\sum_{tw: r-\text{twistings}}f(tw)$$ is a polynomial in $r$ of degree $d+h^1(\Gamma)$. 

In the Grothendieck--Riemann--Roch formula for $c_i$ in \cite{JPPZ18}*{Section 2.4}, the $r$-twisting variables $tw$ appear in the form $(tw)/r$. Also notice that the Grothendieck--Riemann--Roch formula carries a global factor of $r^{2g(v)-1-h^1(\Gamma)}$. Thus after summing over $r$-twistings, we see that the coefficient in $c_i$ corresponding to $\Gamma$ is a Laurent polynomial in $r$ of degree $2g(v)-1$. The result follows. 
\end{proof}

\subsection{Proof of Theorem \ref{thm:rel-orb}}

Then, we have the degree bound for the polynomial of orbifold-relative Gromov--Witten invariants of $(Y_{D_0,r},D_\infty)$.

Given a partion $\vec \mu$ of $\int_d[D]$, a cohomology weighted partion $\mathbf \mu$ is a partion whose parts are weighted by cohomology classes of $H^*(D,\mathbb Q)$. We will use $\mu$ in the correlator notation to specify insertions of relative markings without actually writing out all relative insertions.

\begin{prop}\label{prop-deg-small-age}
We assume that $r$ is a sufficiently large integer. When $m_-=0$, that is, when there are no large-age markings, the degree of the polynomial 
\[
\left\langle\left. \prod_{i=1}^m\tau_{a_{i}}(\delta_{i})\prod_{i=1}^n\tau_{a_{m+i}}(\gamma_{m+i})\right|\mathbf \mu\right\rangle^{(Y_{D_0,r},D_\infty)}_{g,\vec k,n,\vec\mu,d}
\]
in $r$ is $0$ when $g=0$ and bounded by $2g-1$ when $g>0$.
\end{prop}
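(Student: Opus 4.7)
My plan is to apply the virtual localization formula of Lemma \ref{lemma-loc-formula} and track the $r$-dependence of each term in the decomposition over stable decorated graphs $\Gamma$. The key observation is that only the vertex contributions over the zero section in (\ref{loc-contr-0}) carry any $r$-dependence; the target-expansion factor (\ref{loc-contr-infinity}) and the vertex contributions at $D_\infty$ are $r$-independent. Under the assumption $m_-=0$, every vertex $v$ over $D_0$ has $m_-(v)=0$, so its contribution specializes, after pushforward to $\overline{M}_{g,m+n+l(\mu),d}(Y)$ via $\epsilon^{\on{orb}}$ and using the definition $\hat c_i = r^{2i-2g(v)+1}\epsilon^{\on{orb}}_* c_i(-R^\bullet\pi_*\mathcal L)$, to a sum of terms of the form $t^{g(v)-i}\, r^{g(v)-i}\,\hat c_i$ multiplied by edge descendant factors independent of $r$.

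\textbf{Per-vertex bound.} Applying Lemma \ref{lemma-deg-2i} immediately gives $\deg_r \hat c_i \leq 2i$, so the $i$-th summand at vertex $v$ contributes an $r$-degree of at most $g(v)+i$. Summing over the vertices $v \in V_0(\Gamma)$ lying over the zero section and over the summation indices $i_v$, the total $r$-degree of the contribution from the graph $\Gamma$ is bounded by $\sum_{v \in V_0(\Gamma)}(g(v)+i_v)$. The remaining task is then to bound this sum by $2g-1$ for $g>0$, and to force it to vanish when $g=0$.

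\textbf{Bounding the sum.} To control $\sum_v (g(v)+i_v)$ I would exploit two inputs: first, the graph identity $g = h^1(\Gamma) + \sum_v g(v)$; second, the dimension constraint coming from integrating against the virtual class, which bounds each admissible $i_v$ in terms of the virtual dimension of the vertex moduli $\overline{M}_{g(v),\on{val}(v),d(v)}(\mathcal D_r)$ and the fixed degrees of the other insertions at $v$. Combined with the sharper Laurent-polynomial structure underlying the proof of Lemma \ref{lemma-deg-2i} (the pre-pushforward $c_i$ has $r$-degree $\leq 2g(v)-1$, coming from the global factor $r^{2g(v)-1-h^1(\Gamma)}$ in the GRR formula of \cite{JPPZ18}*{Section 2.4}), these inputs should yield the bound $2g-1$ for $g>0$. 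In the $g=0$ case every $g(v)$ vanishes and the Laurent-polynomial part of $c_i$ has degree $\leq -1$, so only the $i_v=0$ term can survive with a non-negative $r$-exponent, giving a constant polynomial.

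\textbf{Main obstacle.} The delicate point is extracting the sharp $2g-1$ rather than $2g$: a naive combination of $\sum_v g(v) \leq g$ with $i_v \leq g(v)$ yields only $2g$. Producing the extra $-1$ requires genuinely using that the Laurent-polynomial bound in the proof of Lemma \ref{lemma-deg-2i} is $2g(v)-1$ and not $2g(v)$, and checking that this single unit is not absorbed by the edge-factor powers of $r$ in (\ref{loc-contr-0}) or cancelled when contributions from different vertices of $\Gamma$ are combined. This combinatorial bookkeeping is where I expect the main effort of the proof to go.
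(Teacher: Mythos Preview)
Your overall strategy matches the paper's: localize, observe that only the zero-section vertex factors carry $r$, and use Lemma~\ref{lemma-deg-2i} to bound the $r$-degree of each term by $\sum_{v}(g(v)+i_v)$. The gap is in how you propose to squeeze this sum down to $2g-1$.

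The paper does \emph{not} use virtual-dimension counts or revisit the Laurent bound from the proof of Lemma~\ref{lemma-deg-2i}. Instead it uses the one constraint you wrote down but never exploited: the $t$-grading. The invariant is non-equivariant, so only the $t^0$ coefficient of the localization expansion survives. Expanding the edge denominators in \eqref{loc-contr-0} contributes factors $(\on{ev}_e^*c_1(L)-d_e\bar\psi)^{k_e}/t^{k_e}$ with $k_e\geq 0$; together with the explicit $t^{-1}$ and the $t^{g(v)-i_v}$ in your expression, the requirement that the total $t$-power be zero gives
\[
\sum_v\sum_{e_v} k_{e_v}=\sum_v\bigl(g(v)-i_v-1\bigr)
\]
(with an extra $+1+k$ on the left if the target expands over $D_\infty$). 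Since every $k_{e_v}\geq 0$, this forces $\sum_v i_v\leq \sum_v(g(v)-1)$, and hence
\[
\sum_v(g(v)+i_v)\ \leq\ \sum_v(2g(v)-1)\ \leq\ 2g-1.
\]
That is the entire source of the ``extra $-1$'' you identify as the main obstacle; no further combinatorics is needed. For $g=0$ the paper simply cites \cite{TY18}*{Section~4}; your sketch for that case is also slightly off, since it ignores the overall $t^{-1}$ factor in \eqref{loc-contr-0}.

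So the missing idea is: take the non-equivariant limit first, and let the resulting $t^0$-constraint bound $i_v$. Your proposed alternative via dimension constraints and the internal GRR estimate is not obviously wrong, but it is a harder route to a sharper inequality than you need, and you yourself flag it as uncertain.
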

\begin{proof}
We take the non-equivariant limit of the virtual localization formula. This means that setting $t=0$ in (\ref{localization-formula}). In this case the coefficients of $t^{<0}$ all vanish because non-equivariant limit exists, and the limit is the $t^0$ coefficient. 

First note that the localization graphs for the target geometry $(Y_{D_0,r},D_\infty)$ are in bijection with localization graphs for the target geometry $(Y,D_\infty)$. Hence the number of localization graphs is independent of $r$.

When $g=0$, the result is already proved in \cite{TY18}*{Section 4}. The genus zero orbifold-relative invariants of $(Y_{D_0,r},D_\infty)$ are constant in $r$.

Suppose $g>0$. Expanding the localization contribution, we see that for each localization graph, its contribution to the $t^0$ coefficient is a sum of terms of the form 
\begin{equation}\label{t0}
\prod_v\hat{c}_{i_v}r^{g(v)-i_v}\prod_v\prod_{e_v}d_{e_v}(-\text{ev}_e^*c_1(L)-d_{e_v}\bar{\psi}_{(e_v,v)})^{k_{e_v}}
\end{equation}
where 
\begin{equation}\label{eqn_deg}
\sum_v\sum_{e_v} k_{e_v}=\sum_v(g(v)-i_v-1).
\end{equation}
Since $\sum_{e_v}k_{e_v}\geq 0$, we have $$0\leq \sum_v i_v\leq \sum_v(g(v)-1).$$ 
If these inequalities are impossible (i.e. $\sum_v(g(v)-1)<0$), then this localization graph does not contribute to the $t^0$ coefficient.

By Lemma \ref{lemma-deg-2i}, the $r$-degree of the contribution, which comes from $\hat{c}_{i_v}r^{g(v)-i_v}$, is thus bounded by $$\sum_v(2i_v+g(v)-i_v)=\sum_v(g(v)+i_v)\leq \sum_v(2g(v)-1).$$
This quantity is bounded by $2g-1$, with maximum achieved by the localization graph with one full genus vertex over $0$. 

If the target expands over infinity, (\ref{t0}) contains a factor of the form $\psi_\infty^k$, with (\ref{eqn_deg}) replaced by $1+k+\sum_v\sum_{e_v} k_{e_v}=\sum_v(g(v)-i_v-1)
$. The same argument yields the bound $2g-2<2g-1$.

The result follows.
\end{proof}

To include orbifold invariants with large ages, we need a refined degree bound of Lemma \ref{lemma-deg-large-age} (will be proved later) when $i\geq g(v)$. In Lemma \ref{lemma-deg-large-age}, we will consider the class $$\epsilon^{\on{orb}}_*\left((r)^{i-g(v)+1}c_i(-R^\bullet\pi_*\mathcal L)\right).$$ By \cite{FWY19}*{Corollary 4.2}, this class is a polynomial in $r$ for $r$ sufficiently large. In Lemma \ref{lemma-deg-large-age}, we will prove that the degree of this class is bounded by $2g-1$. To prove the degree bound for this class, we will follow the proof of \cite{FWY19}*{Theorem 4.1} and keeping track of the degree of the polynomial. The basic idea is the following: \cite{FWY19}*{Lemma 4.6} states an equivariant version of \cite{TY18}*{Theorem 2.3} on the cycle level by proving that equivariant theory can be considered as a limit of non-equivariant theory. Hence, the polynomial in \cite{FWY19}*{Lemma 4.6} is of degree $2g(v)-1$. \cite{FWY19}*{Theorem 4.1}, as well as \cite{FWY19}*{Corollary 4.2}, follows from \cite{FWY19}*{Lemma 4.6} by identifying localization residues.

Recall that $\mathcal D_r$ is the zero divisor of the root stack $Y_{D_0,r}$. Let $\overline{M}_{g,\vec a,d}(\mathcal D_r)$ be the moduli space of orbifold stable maps to $\mathcal D_r$, where $\vec a$ is a vector of ages. Let
\[
\pi: \mathcal C_{g,\vec a,d}(\mathcal D_r)\rightarrow \overline{M}_{g,\vec a,d}(\mathcal D_r)
\]
 be the universal curve, 
\[
\mathcal L\rightarrow \mathcal C_{g,\vec a,d}(\mathcal D_r)
\] 
is the universal $r$-th root. We consider the forgetful map 
\[
\epsilon^{\on{orb}}: \overline{M}_{g,\vec a, d}(\mathcal D_r)\rightarrow \overline{M}_{g,l(\vec a),d}(D)
\]
that forgets orbifold conditions.
We would like to show the following degree bound.
\begin{lemma}\label{lemma-deg-large-age}
The degree of the polynomial $$\epsilon^{\on{orb}}_*\left(r^{i-g+1}c_i(-R^\bullet\pi_*\mathcal L)\cap[\overline{M}_{g,\vec a, d}(\mathcal D_r)]^{\on{vir}}\right)$$ is bounded by $2g-1$ for $i\geq 0$.
\end{lemma}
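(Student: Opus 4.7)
The plan is to adapt the proof of \cite{FWY19}*{Corollary 4.2} (equivalently \cite{FWY19}*{Theorem 4.1}) while carefully tracking the $r$-degree throughout. The starting point is \cite{FWY19}*{Lemma 4.6}, an equivariant cycle-level refinement of \cite{TY18}*{Theorem 2.3}: it identifies a certain equivariant class on a moduli space of stable maps to the $\mathbb P^1$-bundle $Y_{D_0,r}$ with a sum over decorated localization graphs, where each graph contribution is built from classes $c_j(-R^\bullet\pi_*\mathcal L)$ on vertex moduli spaces.

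I would first re-examine the proof of \cite{FWY19}*{Lemma 4.6}, tracking how $r$ enters. The $r$-dependence arises exactly as in the non-equivariant setting from (i) summing over $r$-twistings on each vertex and (ii) the Grothendieck--Riemann--Roch formula for $c_j(-R^\bullet\pi_*\mathcal L)$. Applying the Ehrhart-polynomial argument (Lemma \ref{lem:gen_prop}) together with the global $r^{2g(v)-1-h^1(\Gamma)}$ factor from GRR, exactly as in the proof of Lemma \ref{lemma-deg-2i}, shows that each decorated-graph contribution, and hence the entire equivariant cycle on the left-hand side of \cite{FWY19}*{Lemma 4.6}, is a Laurent polynomial in $r$ of degree at most $2g-1$.

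Next, following the proof of \cite{FWY19}*{Corollary 4.2}, I would identify the class
\[
\epsilon^{\on{orb}}_*\left(r^{i-g+1}c_i(-R^\bullet\pi_*\mathcal L)\cap[\overline{M}_{g,\vec a,d}(\mathcal D_r)]^{\on{vir}}\right)
\]
as a particular equivariant localization residue in the virtual localization formula for $(Y_{D_0,r},D_\infty)$, namely the vertex contribution at $D_0$ coming from a single-vertex localization graph mapping entirely into $\mathcal D_r$ with the prescribed orbifold markings $\vec a$. Matching the prefactor $r^{i-g+1}$ against the $r^{i-g+1-m_-(v)}$ appearing in (\ref{loc-contr-0}) and absorbing the $r^{m_-(v)}$ shift into the large-age normalization (as in (\ref{orb-inv-times-m-})), one obtains the identification up to equivariant factors in $t$ that are independent of $r$. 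Since extracting a fixed power of $t$ does not alter the $r$-degree, the bound from the first step transfers to the class of interest, yielding the claimed bound $2g-1$.

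The main obstacle I anticipate is the bookkeeping in this second step: cleanly separating the equivariant parameter $t$ from the orbifold parameter $r$ inside (\ref{loc-contr-0}) in the presence of large-age markings, and verifying that no hidden $r$-dependence is missed in the identification with the residue, especially in handling the factor $r^{m_-(v)}$. Note that for $i<g$ the statement is already implied by Lemma \ref{lemma-deg-2i} (which yields degree $\le 2i<2g-1$); the essential new content lies in the regime $i\ge g$, where the naive application of Lemma \ref{lemma-deg-2i} would give the strictly weaker bound $g+i$, and the virtual class capping together with the large-age structure is needed to recover the sharper $2g-1$ bound.
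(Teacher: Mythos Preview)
Your overall strategy --- track the $r$-degree through the proof of \cite{FWY19}*{Theorem 4.1 / Corollary 4.2}, first bounding the equivariant cycle and then extracting the single-vertex localization residue --- matches the paper's, and your closing observation that Lemma~\ref{lemma-deg-2i} already covers the range $i<g$ is correct.

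Where your route differs from the paper is in the first step. You describe the proof of \cite{FWY19}*{Lemma 4.6} as though it proceeded by a direct GRR/Ehrhart computation on the equivariant localization expansion, and you assert that ``each decorated-graph contribution'' has $r$-degree at most $2g-1$. That is not quite what Lemma~\ref{lemma-deg-2i} gives: GRR/Ehrhart only bounds the $r$-degree of $\epsilon^{\on{orb}}_*c_i$ by $2g(v)-1$, but the vertex contribution in (\ref{loc-contr-0}) carries an extra factor $r^{i-g(v)+1}$, so one still needs the power-of-$t$ constraint (exactly the $t^0$-extraction argument of Proposition~\ref{prop-deg-small-age}) to force $\sum_v i_v\le\sum_v(g(v)-1)$ and recover the global bound. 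Individual graph residues contain negative powers of $t$ where this constraint fails, so the per-graph statement as you phrase it is not literally correct. The paper does not argue directly on the equivariant side; instead it routes through the non-equivariant and family settings: Proposition~\ref{prop-deg-small-age} (with $m_-=0$) gives the $2g-1$ bound for the non-equivariant pushforward (\ref{non-equiv-cycle}); the identical computation in families over a base $B$ gives the bound for (\ref{family-cycle}); and then Edidin--Graham's finite-dimensional approximation of $B\mathbb C^*$ (taking $E=\mathbb C^N\setminus\{0\}$, $B=\mathbb P^{N-1}$) transfers the bound to the equivariant cycle (\ref{equiv-cycle}). This detour \emph{is} the proof of \cite{FWY19}*{Lemma 4.6} (``equivariant theory as a limit of non-equivariant theory''), not a direct GRR calculation.

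One further small point: in the residue-extraction step the paper keeps $m_-=0$ throughout on the $Y_{D_0,r}$ side; the ages in $\vec a$ on $\mathcal D_r$ (including any large ones) arise from the \emph{edges} of the localization graph, not from large-age markings on $Y_{D_0,r}$. So your attempt to match an $r^{m_-(v)}$ shift against the normalization (\ref{orb-inv-times-m-}) is not needed --- the relevant instance of (\ref{loc-contr-0}) has $m_-(v)=0$.
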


\begin{proof}
First of all, recall that, 
the localization computation of Proposition \ref{prop-deg-small-age} and the degeneration formula imply Theorem \ref{thm:deg} when $m_-=0$. In other words, 
the cycle class
\begin{align}\label{non-equiv-cycle}
\epsilon^{\on{orb}}_*\left[\overline{M}_{g,\vec k,n,d}(Y_{D_0,r})\right]^{\on{vir}}
\end{align}
is a polynomial in $r$ and the degree is bounded by $2g-1$, where we use the same notation $\epsilon^{\on{orb}}$ to denote the forgetful map to $\overline{M}_{g,m+n,d}(Y)$ from different moduli spaces. Note that, this is basically \cite{FWY19}*{Lemma 4.5} with the degree bound and the cycle class (\ref{non-equiv-cycle}) is the orbifold cycle class in \cite{FWY19}*{Lemma 4.5}. 

The next step is to proof the degree bounded for the orbifold cycle class in \cite{FWY19}*{Lemma 4.7}. It can be described as follows. Let $\pi: E\rightarrow B$ be a smooth morphism between two smooth algebraic varieties. Suppose that $E$ is also a $\mathbb C^*$-torsor over $B$. Let 
\[
Y_{D_0,r}\times _{\mathbb C^*}E=(Y_{D_0,r}\times E)/\mathbb C^*
\]
with $\mathbb C^*$ acts on both factors. We consider moduli space $\overline{M}_{g,\vec k,n,d}(Y_{D_0,r}\times_{\mathbb C^*}E)$ of orbifold stable maps to $Y_{D_0,r}\times_{\mathbb C^*}E$, where the curve class $d$ is a fiber class (projects to $0$ on $B$). Let $\left[\overline{M}_{g,\vec k,n,d}(Y_{D_0,r}\times_{\mathbb C^*}E)\right]^{\on{vir}_{\pi}}$ be the virtual cycle relative to the base $B$. Let
\[
\epsilon^{\on{orb}}_{E}: \overline{M}_{g,\vec k,n,d}(Y_{D_0,r}\times_{\mathbb C^*}E) \rightarrow \overline{M}_{g,m+n,d}(Y\times_{\mathbb C^*}E) 
\]
be the forgetful map that forgets orbifold conditions.
Then \cite{FWY19}*{Lemma 4.7} states that the cycle class 
\begin{align}\label{family-cycle}
\left(\epsilon^{\on{orb}}_{E}\right)_*\left[\overline{M}_{g,\vec k,n,d}(Y_{D_0,r}\times_{\mathbb C^*}E)\right]^{\on{vir}_{\pi}}
\end{align}
is a polynomial in $r$. The proof is parallel to the result of (\ref{non-equiv-cycle}) as explained in \cite{FWY19}*{Section 4.2}, hence the degree bound is also $2g-1$ following the same degree computation for (\ref{non-equiv-cycle}) with $m_-=0$ in Proposition \ref{prop-deg-small-age}.

The third step is to proof the same degree bound for the equivariant cycle class
\begin{align}\label{equiv-cycle}
    \epsilon^{\on{orb}}_*\left[\overline{M}_{g,\vec k,n,d}(Y_{D_0,r})\right]^{\on{vir, eq}}.
\end{align}
We follow the proof of \cite{FWY19}*{Section 4.3}. The idea is that equivariant theory can be considered as a limit of non-equivariant theory. By \cite{EG}*{Section 2.2}, the $i$-th Chow group of a space $X$ under an algebraic group $G$ can be defined as follow. Let $V$ be an $l$-dimensional representation of $G$ and $U\subset V$ be an equivariant open set where $G$ acts freely and whose complement has codimension more than $\dim X-i$. Then the $i$-th Chow group is defined as
\begin{align}\label{equiv-chow}
A_i^G(X)=A_{i+l-\dim G}((X\times U)/G).
\end{align}
To apply it to our case, we let $G=\mathbb C^*$ and $E=U=\mathbb C^N-\{0\}$, where $N$ is a sufficiently large integer. Then we have that $(X\times E)/\mathbb C^*$ is an $X$-fibration over $B=U/G=\mathbb P^{N-1}$. Note that
\[
\overline{M}_{g,\vec k,n,d}(Y_{D_0,r}\times_{\mathbb C^*}E)\cong \left(\overline{M}_{g,\vec k,n,d}(Y_{D_0,r})\times E\right)/\mathbb C^*
\]
as moduli spaces. For suitable $N$, (\ref{equiv-cycle}) identifies the equivariant Chow group with a non-equivariant model. Therefore, the equivariant cycle (\ref{equiv-cycle}) is identified with the non-equivariant cycle (\ref{family-cycle}) under (\ref{equiv-chow}). Therefore, the degree bound for (\ref{equiv-cycle}) is also $2g-1$.

The last step is to consider localization residues of $\overline{M}_{g,\vec k,n,d}(Y_{D_0,r})$. We consider the decorated graph with one vertex over $\mathcal D_r$ such that markings and edges are associated with the vector of ages $\vec a$. The localization residue is a polynomial in $r$ and the degree is bounded by $2g-1$. Then the cycle
\[
\epsilon^{\on{orb}}_*\left(\sum_{i=0}^\infty\left(\frac tr\right)^{g-i-1}c_i(-R^\bullet\pi_*\mathcal L)\cap[\overline{M}_{g,\vec a, d}(\mathcal D_r)]^{\on{vir}}\right),
\]
coming from the localization residue as computed in Lemma \ref{lemma-loc-formula} but with $m_-=0$, is a polynomial in $r$ and the degree is bounded by $2g-1$.
This is the conclusion of \cite{FWY19}*{Theorem 4.1} with the degree bound for the polynomial. As a consequence (see also \cite{FWY19}*{Corollary 4.2}), the cycle
\[
\epsilon^{\on{orb}}_*\left((r)^{i-g+1}c_i(-R^\bullet\pi_*\mathcal L)\cap[\overline{M}_{g,\vec a, d}(\mathcal D_r)]^{\on{vir}}\right)
\]
is a polynomial in $r$ and the degree is bounded by $2g-1$. This concludes the lemma.
\end{proof}

Note that, when $i\geq g$, Lemma \ref{lemma-deg-large-age} provides an improved degree bound for the cycle comparing to the degree bound in Lemma \ref{lemma-deg-2i}.

Now we are ready to consider orbifold invariants with large ages.
\begin{prop}\label{prop:deg}
The degree of the polynomial 
\[
r^{m_-}\left\langle\left. \prod_{i=1}^m\tau_{a_{i}}(\delta_{i})\prod_{i=1}^n\tau_{a_{m+i}}(\gamma_{m+i})\right|\mathbf \mu\right\rangle^{(Y_{D_0,r},D_\infty)}_{g,\vec k,n,\vec\mu,d}
\]
in $r$ is $0$ when $g=0$ and bounded by $2g-1$ if $g>0$.
\end{prop}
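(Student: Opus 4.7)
The plan is to adapt the proof of Proposition~\ref{prop-deg-small-age} to the large-age setting by replacing Lemma~\ref{lemma-deg-2i} with Lemma~\ref{lemma-deg-large-age} in the vertex estimate. The $g=0$ case is immediate: when $m_-=0$ it reduces to Proposition~\ref{prop-deg-small-age}, and when $m_->0$ it follows from the same localization collapse used in \cite{TY18}*{Section 4}, now in its large-age incarnation from \cite{FWY}*{Theorem 1.1} (cf.\ Remark~\ref{rmk:rel-orb}), which gives constancy in $r$.

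For $g>0$, I would apply the virtual localization formula (Lemma~\ref{lemma-loc-formula}) and pass to the non-equivariant limit by extracting the coefficient of $t^0$, exactly as in the proof of Proposition~\ref{prop-deg-small-age}. Since the localization graphs for $(Y_{D_0,r},D_\infty)$ are in bijection with those for $(Y,D_\infty)$, the set of graphs is independent of $r$ and each graph $\Gamma$ can be analyzed separately. At each stable vertex $v$ over $D_0$, I would use the third form of the localization contribution in (\ref{loc-contr-0}), which isolates the $r$-dependent class as $r^{2i_v-2g(v)+1-2m_-(v)}\,c_{i_v}(-R^\bullet\pi_*\mathcal L)$ together with a $t$-dependent but $r$-independent prefactor. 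Pairing the $(tr)^{g(v)-i_v+m_-(v)}$ factor against the edge expansion $\prod_e d_e/(1+(\on{ev}_e^*c_1(L)-d_e\bar\psi_{(e,v)})/t)$ to extract the $t^0$ coefficient, and then absorbing the corresponding $r^{m_-(v)}$ piece of the global $r^{m_-}=\prod_v r^{m_-(v)}$ normalization, the net $r$-content at vertex $v$ becomes
\[
r^{i_v-g(v)+1}\,\epsilon^{\on{orb}}_*\bigl(c_{i_v}(-R^\bullet\pi_*\mathcal L)\cap[\overline{M}_{g(v),\on{val}(v),d(v)}(\mathcal D_r)]^{\on{vir}}\bigr),
\]
to which Lemma~\ref{lemma-deg-large-age} applies directly and yields an $r$-degree bound of $2g(v)-1$.

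Summing over the stable vertices of $\Gamma$ lying over $D_0$ gives a total degree bound of $\sum_v(2g(v)-1)\leq 2g-1$, with equality only when $\Gamma$ consists of a single full-genus vertex over $D_0$ carrying no loops, exactly parallel to Proposition~\ref{prop-deg-small-age}. If the target expands over $D_\infty$, the additional factor (\ref{loc-contr-infinity}) tightens the $t^0$ constraint at each vertex by one, forcing $i_v$ down and reducing the bound to $2g-2<2g-1$, so that in all cases one gets $\leq 2g-1$.

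The main obstacle will be the bookkeeping of $r$-exponents, in particular verifying that the $r^{m_-(v)}$ portion of the global rescaling truly absorbs the $-m_-(v)$ offset in $r^{i_v-g(v)+1-m_-(v)}$ vertex by vertex, so that Lemma~\ref{lemma-deg-large-age} applies verbatim to the resulting class. A secondary point is that the edge terms $d_e/(1+(\on{ev}_e^*c_1(L)-d_e\bar\psi_{(e,v)})/t)$ and the rubber factor (\ref{loc-contr-infinity}) are manifestly $r$-independent and hence do not affect the $r$-degree count; this needs only to be remarked. Once these are in place, the proof is essentially a one-for-one substitution into the argument of Proposition~\ref{prop-deg-small-age}.
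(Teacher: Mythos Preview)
Your proposal is correct and follows essentially the same approach as the paper: adapt the argument of Proposition~\ref{prop-deg-small-age} by replacing the vertexwise input Lemma~\ref{lemma-deg-2i} with Lemma~\ref{lemma-deg-large-age}, citing \cite{FWY}*{Theorem 1.1} for $g=0$, and summing the per-vertex bounds $2g(v)-1$ to obtain $2g-1$. The paper's proof is in fact terser than yours and does not unpack the $r^{m_-}=\prod_v r^{m_-(v)}$ bookkeeping or the rubber case explicitly; your added detail is harmless, though note that once Lemma~\ref{lemma-deg-large-age} gives the uniform bound $2g(v)-1$ independent of $i_v$, the separate ``forcing $i_v$ down'' analysis for the expanded target is no longer needed.
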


\begin{proof}
The proof is similar to the proof of Proposition \ref{prop-deg-small-age} after using Lemma \ref{lemma-deg-large-age}. We need to prove Proposition \ref{prop-deg-small-age} first because we need to use it to prove Lemma \ref{lemma-deg-large-age}.

When $g=0$, the result is already known in \cite{FWY}*{Theorem 1.1}. 

When $g>0$, following the proof of Proposition \ref{prop-deg-small-age}, we take the $t^0$-coefficient of the localization contributions. Applying the degree bound of Lemma \ref{lemma-deg-large-age} to the $t^0$-coefficient of the localization contributions (after multiplying by $r^{m_-}$), we see that the localization contribution from each vertex $v$ is a polynomial in $r$ and the degree is bounded by $2g(v)-1$. 
Therefore, the degree bound for the total contribution is $2g-1$. This concludes the proposition.
\end{proof}

\begin{proof}[Proof of Theorem \ref{thm:deg}]
It follows from Proposition \ref{prop:deg} by the degeneration formula applied to the degeneration of $X_{D,r}$ into $X\cup_D Y_{D_0, r}$. 
\end{proof}

\begin{ex}
When $g=1$, then $2g-1=1$. So the genus one orbifold invariants is a linear function in $r$. We can revisit the counterexample in \cite{ACW}*{Section 1.7}. Let $X=E\times \mathbb P^1$, where $E$ is an elliptic curve. Let $D=X_0\cup X_\infty$ be the union of $0$ and $\infty$ fibers over $\mathbb P^1$. Taking a fiber class, then the genus one orbifold invariants with no insertions is a linear function of roots $r$ and $s$.  
\end{ex}

\subsection{Proof of Theorem \ref{thm:rel=orb}}
Now we turn to Theorem \ref{thm:rel=orb}.
\begin{proof}[Proof of Theorem \ref{thm:rel=orb}]
Localization computation also shows that relative and orbifold invariants are the same if there are no stable maps with higher genus components map into the divisor $D_0$. In this case, the vertex contribution has only genus zero contributions (that is, $g(v)=0$ for all vertex $v$ over zero) and the computation becomes similar to the genus zero case in \cite{TY18}*{Section 4} and \cite{FWY}*{Section 6}.
\end{proof}

\begin{rmk}
Jonathan Wise told us that, according to Dhruv Ranganathan, it is expected that reduced relative invariants and reduced orbifold invariants are equal. While the foundation for these invariants has not fully set-up (see, for example, \cite{BNR}), our computation suggests that the equality should hold for reduced invariants.
\end{rmk}

\section{The coefficients of the polynomial}
In this section, we prove Theorem \ref{thm:coeff} by considering the degeneration of $X_{D,r}$ into $X\cup_D Y_{D_0, r}$.  
By \cite{AF}*{Theorem 0.4.1}, the degeneration formula for Gromov--Witten invariants of $X_{D,r}$ is the following:

\begin{align}\label{degeneration-orbifold}
&\left\langle \prod_{i=1}^m\tau_{a_{i}}(\delta_{i})\prod_{i=1}^n \tau_{a_{m+i}}(\gamma_{m+i})\right\rangle^{X_{D,r}}_{g,\vec k,n,d}=\\
\notag&\sum  \frac{\prod_i \eta_i}{|\on{Aut}(\eta)|} \left\langle\left.\prod_{i=1}^m\tau_{a_{i}}(\delta_{i})\prod_{i\in S}\tau_{a_{m+i}}(\gamma_{m+i})\right |\eta\right\rangle^{\bullet,(Y_{D_0,r},D_\infty)}_{g_1,\vec k,|S|,\vec \eta,d_1}\left\langle\eta^\vee\left|\prod_{i\not\in S}\tau_{a_{m+i}}(\gamma_{m+i})\right.\right\rangle^{\bullet,(X,D)}_{g_2,\vec \eta,n-|S|,d_2}, 
\end{align}
where $\eta^\vee$ is defined by taking the Poincar\'e duals of the cohomology weights of the cohomology weighted partition $\eta$; $|\Aut(\eta)|$ is the order of the automorphism group $\Aut(\eta)$ preserving equal parts of the cohomology weighted partition $\eta$. The sum is over all splittings of $g$ and $d$, all choices of $S\subset \{ 1,\ldots,n \}$, and all intermediate cohomology weighted partitions $\eta$ such that invariants on the right-hand side of (\ref{degeneration-orbifold}) do not vanish. The superscript $\bullet$ stands for possibly disconnected Gromov-Witten invariants. Note that
\[
g_1+g_2+l(\eta)-1=g.
\]

\begin{proof}[Proof of Theorem \ref{thm:coeff}]
Assume $g>0$. We consider 
\[
r^{m_-}\left\langle \prod_{i=1}^m\tau_{a_{i}}(\delta_{i})\prod_{i=1}^n \tau_{a_{m+i}}(\gamma_{m+i})\right\rangle^{X_{D,r}}_{g,\vec k,n,d},
\]
where $m_-$ is the number of negative elements in the partition $\{k_i\}_{i=1}^m$.
For each summand in (\ref{degeneration-orbifold}), if $g_2=g$, then $g_1=0$. In this case, the summand is constant in $r$ by \cite{FWY}*{Theorem 6.1} as genus zero invariants of $(Y_{D_0,r},D_\infty)$ (multiplied by $r^{m_-}$) are constant in $r$. When $m_-=0$, it follows from \cite{ACW}*{Theorem 1.2.1}, see also \cite{TY18}*{Theorem 4.1}. Therefore, the non-constant terms in $r$ appears when $g_1>0$ and $g_2<g$.

Let $j>0$. We consider the $r^j$-coefficient. By Theorem \ref{thm:deg} we can assume that $j\leq 2g-1$. Let $g_0$ be the largest integer such that $j> 2g_0-1$. 
In the degeneration formula (\ref{degeneration-orbifold}), if $g_1\leq g_0$, then by Proposition \ref{prop:deg}, the first factor in the summand is of $r$-degree bounded by $2g_1-1\leq 2g_0-1< j$. Thus terms with $g_1\leq g_0$ do not contribute to the $r^j$ coefficient, and $r^j$ coefficient is formed by the following quantities
\begin{enumerate}
    \item 
    genus $g_1$ Gromov--Witten invariants of $Y_{D_0,r}$ with $g_1> g_0$.
    \item
    relative Gromov--Witten invariants of $(X,D)$ of genus $g_2< g-g_0$. Note that $g-g_0\leq g$.
\end{enumerate}
By localization formula, Gromov--Witten invariants of $Y_{D_0,r}$ are expressed in terms of twisted Gromov-Witten invariants of $D$ and of root gerbe $D_r$. By quantum Riemann--Roch theorems of Coates--Givental \cite{CG} and Tseng \cite{Tseng}, these twisted invariants are expressed in terms of untwisted invariants of $D$ and $D_r$. By gerbe duality results proven by Andreini--Jiang--Tseng \cite{AJT} and Tang--Tseng \cite{TT}, invariants of $D_r$ are determined by invariants of $D$. 
\end{proof}


\bibliographystyle{amsxport}
\bibliography{universalbib}

\end{document}